\newtheorem{theorem}{\bf Theorem}
\newtheorem{definition}{\bf Definition}
\newtheorem{lemma}{\bf Lemma}
\DeclareMathOperator{\argmin}{argmin}
\newcommand*{\LONGVERSION}{}%  %To be posted online
\title{\LARGE \bf
Incentivizing Truth-Telling in MPC-based Load Frequency Control
}
\author{Takashi Tanaka$^{1}$ and Vijay Gupta$^{2}$% <-this % stops a space
%\thanks{*This work was supported by ...}% <-this % stops a space
\thanks{$^{1}$ACCESS Linnaeus Centre, KTH Royal Institute of Technology, 100 44 Stockholm, Sweden
        {\tt\small ttanaka@kth.se}}%
\thanks{$^{2}$Department of Electrical Engineering, University of Notre Dame, Notre Dame, IN 46556, United States
        {\tt\small vgupta2@nd.edu}}%
}
\begin{document}

\maketitle
\thispagestyle{empty}
\pagestyle{empty}

%%%%%%%%%%%%%%%%%%%%%%%%%%%%%%%%%%%%%%%%%%%%%%%%%%%%%%%%%%%%%%%%%%%%%%%%%%%%%%%%
\begin{abstract}
We present a mechanism for socially efficient implementation of model predictive control (MPC) algorithms for load frequency control (LFC) in the presence of self-interested power generators.
Specifically, we consider a situation in which the system operator seeks to implement an MPC-based LFC for aggregated social cost minimization, but necessary information such as individual generators' cost functions is privately owned. 
Without appropriate monetary compensation mechanisms that incentivize truth-telling,  self-interested market participants may be inclined to misreport their private parameters in an effort to maximize their own profits, which may result in a loss of social welfare. The main challenge in our framework arises from the fact that every participant's strategy at any time affects the future state of other participants; the consequences of such dynamic coupling has not been fully addressed in the literature on online mechanism design.  We propose a class of real-time monetary compensation schemes that incentivize market participants to report their private parameters truthfully at every time step, which enables the system operator to implement MPC-based LFC in a socially optimal manner. 
\end{abstract}

%%%%%%%%%%%%%%%%%%%%%%%%%%%%%%%%%%%%%%%%%%%%%%%%%%%%%%%%%%%%%%%%%%%%%%%%%%%%%%%%
\section{Introduction}

Load frequency control (LFC) regulates the power flow between different areas in the power grid to minimize transient frequency deviation and ensuring steady state frequency deviation to be zero. The power flow should ideally be done in a way that minimizes the operating cost of the power system. By its nature, LFC is a large-scale and highly complex optimal control problem that is typically solved using a hierarchical architecture \cite{ilic2007hierarchical,dorfler2014breaking}. In a deregulated power system, those generators (more precisely, their owners) that provide LFC services have to be compensated appropriately by the system operator. It is, therefore, important to design appropriate market mechanisms in which the collective profit-maximizing behaviors of such entities are aligned with the solution that maximizes the social welfare. Efficient market design for LFC particularly as renewable penentration increases continues to see active work. For one, the additional stochasticity that renewables bring to the grid means extra disturbances in frequency that the LFC needs to suppress \cite{bitar2012bringing}. Further, physical inertia of the conventional generators, which is an integral part of the frequency control loop, is reduced as more and more  conventional generators are replaced with converter-based power suppliers \cite{doherty2010assessment,morren2006inertial}.  To maintain system frequency even in the face of these challenges, new control and real-time optimization algorithms, \emph{together with appropriate market mechanisms to incentivize companies to provide LFC services}, will be needed.

%\subsection{Model predictive control for load frequency control}

A control algorithm that has recently been proposed for LFC is Model Predictive Control (MPC), also known as Receding Horizon Control (RHC)~\cite{ulbig2013predictive,venkat2008distributed}. MPC is a sub-optimal control algorithm, which determines control inputs by  solving a finite horizon open-loop optimal control problems (i.e., optimization problems) repeatedly at every time step~\cite{camacho2013model}. Advantages of MPC include the flexibility to take various system constraints into account explicitly and  
the fact that computational tractability is obtained at the expense of only moderate performance loss of control. Due to these advantages, MPC has been proposed for several different purposes in power system operations~\cite{xie2012fast,arnold2011model,venkat2008distributed,
khalid2009model,ulbig2013predictive}. In particular, it has been proposed for real-time frequency control of the synchronous grid in order to compensate inertia loss due to the increasing penetration of distributed generation units \cite{ulbig2013predictive}.  Distributed versions of MPC algorithm have also been proposed for automatic generation control in \cite{venkat2008distributed}. 
%In \cite{xie2012fast}, the algorithm is intended to be performed by a price-taking wind farms with storage resources to maximize their expected profit with respect to the price prediction and wind forecasts.  
%Similarly, \cite{arnold2011model} applies MPC to the operation of power supply grids comprised of small number of conventional generators, renewable sources, and energy hubs. Electricity and natural gas prices are considered as exogenous inputs.

%\subsection{Mechanism design to assist MPC}
%\label{sec_introB}

However, most of the existing literature assumes that the MPC algorithm is implemented by the central system operator. Even works that consider a distributed setting, the individual decision makers are assumed to be fully cooperative with the central system operator. In a realistic situation in a deregulated power market, centralized solutions are not feasible. Further, the individual  decision makers that own the generators are strategic entities that seek to optimize their profits, rather than to collaborate with the central  system operator. For ensuring the wide-spread applicability of MPC for frequency control in deregulated power systems, many issues, thus, need to be addressed:
\begin{itemize}[leftmargin=3ex]
\item[(A)] Information-asymmetry: Important system parameters, such as generation cost functions, are private information and owned by the distributed decision-makers in the system.
\item[(B)] Faithful implementation of control actions: Control actions are executed by these strategic decision makers who are market participants, and not directly by the system operator. Unless the central system operator has direct control of power system resources, an incentive is needed for market participants to faithfully execute appropriate control actions  \cite{berger1989real,tanaka2012dynamic}.
\item[(C)] Distributed computation of optimal control actions: Even for calculating the optimal control actions, distributed computation and communication among  participants may be required (e.g., \cite{venkat2008distributed}). Usually, incentives are needed for market participants to faithfully implement the prescribed distributed computation and communication algorithms \cite{tanaka2013faithful}. 
\end{itemize}
In other words, to utilize MPC for load frequency control, there is a need to design a market mechanism that incentivizes the companies that own generators and are strategic market participants to (A) report their private parameters to the system operator truthfully, (B) execute socially optimal control actions faithfully, and (C) execute distributed algorithms faithfully. Incorrect consideration of the strategic behaviors from the participants can lead to control actions that are not globally optimal, leading to loss of social welfare. Since the power market is usually an oligopoly, this phenomenon is similar to the well known fact that Cournot equilibrium is, in general, different from the competitive (and socially optimal) equilibrium \cite{mas1995microeconomic} in oligopoly. In fact, several examples of market failures and ensuing power system disturbances (California in 2000 and 2001, Texas in 2005, and New York City between 2006 and 2008 \cite{vaheesan46market}) are now revealed to be, at least in part, due to underestimated market power of the participants in the power markets.

In this paper, we design a market mechanism that mitigates market participants' incentives for strategic behaviors when the system operator wishes to implement a model predictive control algorithm. For simplicity, in this paper, we focus on item (A) -- information assymetry -- alone; items (B) and (C) are addressed elsewhere. We assume that the system operator acts as a central planner  who is responsible for solving MPC in a real-time manner, and has the authority to command all generators' control inputs. However, to calculate the socially optimal solution, the operator needs to know the private parameters (such as costs) from the various participants. Thus, the challenge is to incentivize market participants to report their private parameters truthfully for every iteration of the MPC algorithm (i.e., at every time step) so that the central planner is able to solve for the socially optimal solution.

Truthful reporting of private information has been considered in mechanism design theory, which is an extensively studied branch of microeconomic theory \cite[Ch.23]{mas1995microeconomic}. Indeed, applying mechanism design theory to power system operation is not a new attempt \cite{hobbs2000evaluation, samadi2012advanced, tang2011stochastic}. However, most such attempts are currently restricted to static (single-stage) mechanism design problems. Since real-time market-based power system operation involves repetitive auctions, dynamic extensions of these results are desired. 

The framework of online mechanism design can be found in, e.g.,   \cite{parkes2004mdp,bergemann2010dynamic,pavan2014dynamic} and references therein. However, the problem has been solved under specific assumptions such as the state evolution of the agents being independent of each other. %In a dynamic coupled system such as the power grid, such assumptions do not hold and the problem of online or dynamic mechanism design remains open in general. 
In the specific setting of our paper, the main challenge arises from the fact that every participant's strategy at any time affects the future state of other participants; such dynamic coupling has not been fully addressed in online mechanism design literature; see \cite{cavallo2012optimal} for a related discussion.  Related works include~\cite{tanaka2012dynamic} where an online mechanism design method was proposed to incentivize strategic power generators to execute optimal control actions at every time instance (item (B) in the list above). It was subsequently shown in \cite{tanaka2013faithful2} that a similar online mechanism can also be used for distributed computation and communication actions (item (C) in the list above). This result is discussed in a general framework of \emph{indirect} mechanism design theory in \cite{tanaka2013faithful}, where sufficiency and necessity of VCG-like mechanism is presented. Allied works also include~\cite{okajima2013dynamic,murao2014dynamic} which consider stochastic (LQG) power systems and propose Bayesian incentive compatible mechanisms. 

In contrast to \cite{tanaka2012dynamic,tanaka2013faithful2,tanaka2013faithful}, the goal of this paper is to present how a mechanism design framework can be used to address item (A)  with the motivating application of MPC-based LFC. 
We first introduce the notion of $\epsilon$-incentive compatibility, under which no strategic generator can gain more than $\epsilon$ by misreporting private information.
Then, we propose a VCG-like online mechanism that implements MPC with  $\epsilon$-incentive compatibility.
Finally, we analyze how the horizon length $T$ of the considered MPC affects $\epsilon$  in the proposed mechanism. An explicit relationship between $T$ and $\epsilon$ is obtained, in particular, 
 for LQ control problems. We note that receding horizon mechanisms in the context of power systems were considered in \cite{okajima2013dynamic,murao2014dynamic}, but the relationship between horizon length and $\epsilon$-incentive compatibility was not presented there.

The paper is organized as follows. We begin in Section~\ref{secexample} with a motivating example showing that in MPC implementations of LFC, strategic behavior by the market participants can easily lead to loss of social welfare. The problem is formulated in Section~\ref{sec:formulation}. The online mechanism is presented as a solution in Section~\ref{sec:vcg}. The results are illustrated with the special case of LQ optimal control in Section~\ref{secperformance}. Section~\ref{sec:conclusions} concludes the paper and presents some avenues for future work.

%
%More general contexts: \cite{li2011optimal} considers a demand response program at the interface of a utility company and households and theoretically demonstrates its effectiveness to mitigate peak load and adapt elastic demand to fluctuating generations. However, participants are assumed to be price-taking agents. Traffic etc... 

%Aggregator bussiness model \cite{parvania2013optimal}
%Battery \cite{divya2009battery}

%A mechanism design approach is also used in \cite{tanaka2012dynamic} to discuss the issue 2). A distributed MPC algorithm is proposed for automatic generation control in \cite{venkat2008distributed}. However, \cite{venkat2008distributed} does not consider strategic agents as mentioned in 3). The issue 3) may be partially addressed by an indirect mechanism design framework \cite{tanaka2013faithful}, although the discussion of \cite{tanaka2013faithful} cannot be directly used for the faithful implementation of the distributed MPC. Note that the issues 1) and 3) may be jointly resolved by an appropriate market design \cite{li2011optimal}, whose basic idea can be generalized to the market based distributed MPC \cite{venkat2008distributed, rantzer2009dynamic}. However, current demand response programs are usually based on the assumption of pure competition and hence vulnerable to the issue 4).

\section{Motivating Example}
\label{secexample}

\begin{center}
\begin{table}
  \caption{Tie line stiffness $T_{12}=1$, time step $\Delta_{\text{sample}}=0.1$.}
  \label{table:parameters}
{\tabulinesep=0.6mm
\begin{center}
  \begin{tabu}{| c | c  | c |}
   \hline
Angular momentum & $M_1=3.5$ & $M_2=4$  \\ 
$\frac{\text{Percent change in load}}{\text{Percent change in frequency}}$ & $D_1=2$ & $D_2=2.75$ \\
Charging time constant & $T_{\text{CH}_1}=50$ & $T_{\text{CH}_2}=10$ \\
$\frac{\text{Percent change in frequency}}{\text{Percent change in unit output}}$ & $R^{\text{f}}_1=0.03$ & $R^{\text{f}}_2=0.07$ \\
Governor time constant & $T_{\text{G}_1}=40$ & $T_{\text{G}_2}=25$ \\
\hline
  \end{tabu}
\end{center}
}
  \end{table}
\end{center}

\begin{figure}[t]
\centering
    \includegraphics[width=\columnwidth]{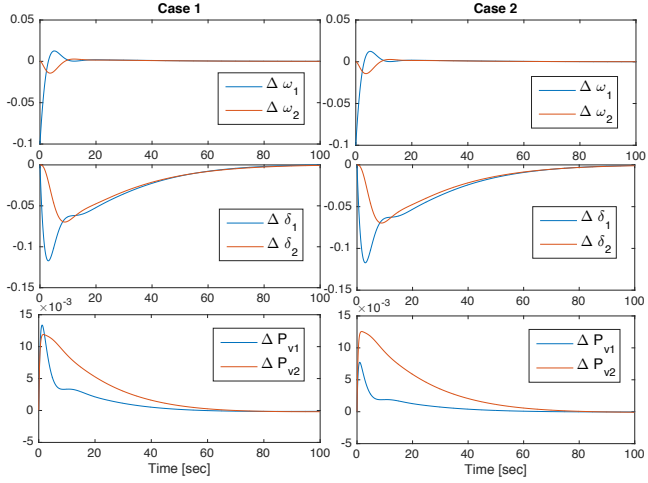} 
    \caption{Case 1: LFC in which both generators report private cost function truthfully. Case 2: LFC in which generator 1 misreports.}
    \label{fig:comparison}
\end{figure}

\begin{center}
\begin{table}
  \caption{Performance comparison.}
  \label{table:performance}
{\tabulinesep=0.6mm
\begin{center}
  \begin{tabu}{| c | c  | c | c |}
   \hline
   & Cost for area 1& Cost for area 2 &  Total cost \\
   \hline
Case 1 & $24.64$ & $18.01$ & $42.65$  \\ 
Case 2 & $23.83$ & $19.62$ & $43.45$  \\ 
\hline
  \end{tabu}
\end{center}
}
  \end{table}
\end{center}

Without appropriate monetary compensation mechanisms, a strategic market participant (e.g., a generator) may misreport its private parameters, if such false reports induces the system operator to command a control input that is globally sub-optimal but incurs smaller cost for the misreporting participant. In an LFC framework, we consider the following illustrative example. Consider a situation in which two power generation firms, say firm 1 and firm 2, own all generators in areas 1 and 2, respectively. For simplicity, we assume that all generator turbines in each area are completely synchronized, and hence modeled as a single large turbine. Assume that there is a tie line between two areas over which two areas can exchange power.  A simplified swing equaition linearized around its nominal operation point, borrowed from \cite{venkat2008distributed},  is given by (for $i,j=1,2$)\footnote{For implementation of MPC, the state space model is converted a discrete-time model.}
\begin{subequations}
\begin{align}
&\frac{d\Delta \omega_i}{dt}=-\frac{D_i}{M_i}\Delta \omega_i\!+\!\frac{1}{M_i}\Delta P_{\text{mech}_i}\!-\!\frac{T_{\text{tie}}}{M_i}(\Delta \delta_i\!-\!\Delta \delta_j) \\
&\frac{d\Delta P_{\text{mech}_i}}{dt}=-\frac{1}{T_{\text{CH}_i}}\Delta P_{\text{mech}_i}+\frac{1}{T_{\text{CH}_i}}\Delta P_{\text{v}_i} \\
&\frac{d\Delta P_{\text{v}_i}}{dt}=-\frac{1}{T_{\text{G}_i}}\Delta P_{\text{v}_i}-\frac{1}{R_i^{\text{f}}T_{\text{G}_i}}\Delta \omega_i + \frac{1}{T_{\text{G}_i}}\Delta P_{\text{ref}_i} \\
&\frac{d\Delta \delta_i}{dt}=\Delta \omega_i.
\end{align}
\end{subequations}
This is a dynamical system with eight dimensional state vector $x_t=(x^1_t, x^2_t)$ where
$
x_t^i=(\Delta \omega_i, \Delta P_{\text{mech}_i}, \Delta P_{\text{v}_i},\Delta \delta_i).
$
For each area $i=1,2$, 
 $\Delta \omega_i$ is the frequency deviation from the nominal value, $\Delta P_{\text{mech}_i}$ is the deviation in mechanical power, $\Delta P_{\text{v}_i}$ is the deviation in steam valve position, $\Delta \delta_i$ is the deviation in mechanical angle.
Command input signal 
$
u_t^i=\Delta P_{\text{ref}_i}
$
for each $i=1,2$ is the control input. Other system parameters used in the simulation are also borrowed from \cite{venkat2008distributed} and are summarized in Table \ref{table:parameters}.
Suppose that firm $i$'s generation cost  is modeled by 
\[
J^i=\sum_{t=1}^\infty \left(\|x_t\|_{Q^i}^2+\|u_t\|_{R^i}^2\right)
\] 
where $Q^i$ and $R^i$ are weight matrices only known to firm $i$. In this example, we assume that true values of these weight matrices are given by
\[
Q^i=\text{diag}(10, 1, 500, 10), \;\; R^i=0.1
\]
for both $i=1,2$. Since these are private variables, each firm needs to send this data to the system operator. The system operator schedules a socially optimal control input minimizing $J^1+J^2$ based on the reported information. Notice that once private variables $Q^i$ and $R^i$ are available, this is the standard Linear-Quadratic-Regulator (LQR) problem.

As a case study, we consider a situation in which a frequency deviation $\omega_1(0)=-0.1$ occurs in area 1 at time $t=0$, and study how the LFC recovers the nominal operation points.
\subsubsection{Case 1} This is the reference scenario in which both firms report their private variables truthfully and hence the system operator is able to schedule the socially optimal control sequence. Figure~\ref{fig:comparison} (left) shows the dynamic response of the frequency deviation in each area, deviation in mechanical angle in each area, and deviation in steam valve position in each area. The first line of Table~\ref{table:performance} summarizes the simulated value of $J^1$ and $J^2$ as well as the total cost $J^1+J^2$.

\subsubsection{Case 2} In this scenario, we suppose that firm 1 misreports its private value and the central planner designs a control law based on the reported false information. In particular, suppose that firm 1 reports $Q^1=\text{diag}(10, 1, 1000, 10)$, pretending that moving steam valve position in area 1 is more costly than the reality. Figure~\ref{fig:comparison} (right) shows the dynamic response in this scenario. Notice that steam valve position in area 1 is kept low compared to Case 1. 
Of course, the control signal in this scenario does not attain social optimality since it is generated using false information. It can be seen in Table~\ref{table:performance} that the value of $J^1 +J^2$ has increased. However, notice that $J^1$ alone has decreased from Case 1 to Case 2, indicating that \emph{firm 1 has an incentive to misreport its private variables}. 
This simple example suggests that, without an appropriate monetary compensation mechanism, one cannot expect truthful reports from the strategic market participants.

\section{Problem Formulation}
\label{sec:formulation}
To formally describe our problem, suppose that there exist a central system operator and $N$ strategic market participants (i.e., generators) that seek to provide frequency control services and be compensated.
The plant to be controlled is given by
\begin{equation}
\label{eqplant}
x_{t+1}=f(x_t, u_t), \;\; t=0,1,\cdots .
\end{equation}
Suppose that the state $x_t$ of the system is fully observable by the system operator.
Without loss of generality, we assume that $x_t=0$ is the nominal operating point of the system.
The market participants are indexed by $i\in\{1,\cdots, N\}$, and the state and control vectors are concatenations of their ``local states" and ``local controls"
\[x_t=(x_t^1,\cdots, x_t^N), \;\; u_t=(u_t^1,\cdots, u_t^N).\]
Suppose that the control cost $c_t^i(x_t, u_t)$ is incurred to the $i$-th participant at time step $t$.
The purpose of the system operator is to minimize the aggregated cost by solving an optimal control problem
\begin{equation}
\label{eqtrueoptcontprob}
\min_{u_{0:\infty}} \; \; \sum_{t=1}^\infty\sum_{i=1}^N c_t^i(x_t^i, u_t^i)
\end{equation}
subject to the dynamical equation (\ref{eqplant}), as well as local state and control constraints
\[
 x_t^i \in\mathcal{X}_t^i,   u_t^i \in\mathcal{U}_t^i, \;\;\; i=1,\cdots, N, \;\; t=1,2,\cdots
\]
In (\ref{eqtrueoptcontprob}), $u_{0:\infty}$ is a short-hand notation for $(u_0,u_1,\cdots)$. 
Similar notations will be used in the sequel.

\subsection{MPC algorithm}
Solving (\ref{eqtrueoptcontprob}) directly may be computationally expensive as well as not useful since parameters such as fuel prices may change over time. Hence, we assume the system operator repeatedly solves the following open-loop optimal control problem at every $t$:
\begin{subequations}
\label{eqmpc}
\begin{align}
 \min\;\; & \sum_{k=t}^{t+T} \sum_{i=1}^N c_k^i(x_k^i, u_k^i) \\
\text{s.t. }\;\;&  x_{k+1}=f_k(x_k, u_k) \label{eqmpc1}\\
& x_k^i \in\mathcal{X}_k^i,   u_k^i \in\mathcal{U}_k^i \\
& \forall k\in\{t,\cdots,t+T\}, \forall i \in\{1,\cdots, N\},
\end{align}
\end{subequations}
where $T$ is the horizon length.
Note that at every time step the system operator observes $x_t$, which is used as the initial condition for (\ref{eqmpc1}).

\subsection{Problem setup}
Suppose that the function $f_t$, state constraint set $\mathcal{X}_t^i$ and the control constraint set $\mathcal{U}_t^i$ are publicly known but the cost function $c_t^i$ is
private and only known to the $i$-th agent.
Private information must be truthfully reported to the central planner in advance so that the open-loop optimal control problem (\ref{eqmpc}) is correctly solved. 
In particular, we denote by $\theta_t^i$ the \emph{type} of the $i$-th agent, which fully describes the function $c_t^i$.
In the example in Section~\ref{secexample}, $\theta_t^i$ was a weight matrix $Q^i(=Q_t^i=Q_{t+1}^i=\cdots)$.
The type vector $\theta_t^i$ must be truthfully reported to the central planner at time step $t$.
Suppose that the $i$-th agent is interested in minimizing his/her own cost $\sum_{t=0}^\infty c_t^i(x_t^i, u_t^i)$ rather than aggregated social cost (\ref{eqtrueoptcontprob}). 
As we have seen in Section~\ref{secexample}, strategic agents may be incentivized to misreport their types in an effort to minimize their own cost. 
%It is well-known in mechanism design theory that an appropriate monetary compensation mechanism is almost always\footnote{``Almost always" in the sense of .... \cite{shoham2008multiagent}} needed to remove such incentives. 

\subsection{Disturbance model}
For simplicity, we assume that LFC system is subject to an impulse frequency disturbance at $t=0$, and this is modeled by a non-zero initial condition $x_0 \neq 0$. We assume that system operator and market participants have no knowledge about probability distribution of $x_0$. Our goal is to design a mechanism that induces truth-telling regardless of the realization of $x_0$. (In other words, in this paper we employ the solution concept of \emph{ex post} incentive compatibility.)
Note that impulse or step signals are commonly used disturbance models in frequency control (e.g., \cite{dorfler2014breaking, andreasson2014distributed}). 
%Further, our online mechanism design scheme remains valid (i.e., satisfies $\epsilon$-incentive compatibility introduced later) even when the dynamics  (\ref{eqplant}) is subject to stochastic or deterministic noise at every time step if all market participants believe that there is no noise in the future. 

\section{Proposed Online Mechanism}
\label{sec:vcg}
To achieve the aforementioned goal, we propose a real-time mechanism inspired by the online Vickrey-Clarke-Groves (VCG) mechanism \cite{bergemann2010dynamic}.
Let $u_{t:\infty}$ be a sequence of control inputs, and $x_{t:\infty}$ be the induced state trajectory.
Denote by
\begin{subequations}
\begin{align}
J_t^i (u_{t:\infty}^i;\theta_{t:\infty}^i)&\triangleq \sum_{k=t}^\infty c_k^i(x_k^i,u_k^i) \label{defji}\\
J_t(u_{t:\infty};\theta_{t:\infty})&\triangleq \sum_{i=1}^N J_t^i (u_{t:\infty}^i;\theta_{t:\infty}^i). \label{defj}
\end{align}
\end{subequations}
the cost-to-go functions. 
Note that the dependence of the cost-to-go function on $\theta_{t:\infty}$ reflects the fact that it is evaluated using private information $c_k^i$ contained in $\theta_{t:\infty}$.
The  subroutine $\textsf{OPENLOOP}_T(\cdot)$ shown in Algorithm \ref{alg:MPC} summarizes how our MPC algorithm is executed at every time step.
At every $t$, this subroutine receives reported type vectors $\theta_t$ from the agents. Based on the received information, an open-loop optimal control problem (\ref{eqmpc}) is formulated. 
By solving (\ref{eqmpc}) numerically, the subroutine returns a control action $u_t=\textsf{OPENLOOP}_T(\theta_t)$ to be implemented at the current time step.

\begin{algorithm}[htb]
        \caption{$\textsf{OPENLOOP}_T(\cdot)$}
        \textbf{Input:} Type vectors $\theta_t=(\theta_t^1,\cdots,\theta_t^N)$ reported by the agents, which contains necessary information to formulate an open-loop optimal control problem (\ref{eqmpc}).

        \textbf{Output:} Control input $u_t$ at current time step $t$.
%        \textbf{Metode:}
        \begin{algorithmic}[1]
\State{Formulate an open-loop optimal control problem (\ref{eqmpc}).}
\State{Solve (\ref{eqmpc}) to obtain an optimal control sequence $u_{t:t+T}$.}
\State{Discard $u_{t+1:t+T}$ and return $u_t$.}
        \end{algorithmic}
        \label{alg:MPC}
\end{algorithm}

The MPC scheme with the horizon length $T$ is a sequential execution of this subroutine
\begin{align*}
u_t&=\textsf{OPENLOOP}_T(\theta_t) \\
u_{t+1}&=\textsf{OPENLOOP}_T(\theta_{t+1}) \\
&\;\; \vdots
\end{align*}
This sequence of equations is denoted by $u_{t:\infty}=\textsf{MPC}_T(\theta_{t:\infty})$. 
Note that $\textsf{MPC}_T(\cdot)$ maps reported type vectors $\theta_{t:\infty}$ to a social decision $u_{t:\infty}$.
In the terminology of mechanism design, such a map is called a \emph{decision rule}.
\begin{definition}
A decision rule $\textsf{MPC}_T(\cdot)$ is said to be \emph{$\epsilon$-efficient} if, for all $t$ and $x_t$,
\[
J_t(\textsf{MPC}_T(\theta_{t:\infty});\theta_{t:\infty})\leq J_t(u_{t:\infty};\theta_{t:\infty})+\epsilon
\]
for all $u_{t:\infty}\in \prod_{k=t}^\infty\prod_{i=1}^N \mathcal{U}_{k,i}$ and $\theta_{t:\infty}\in\prod_{k=t}^\infty\prod_{i=1}^N \Theta_{k,i}$.
\end{definition}

Notice that, in general, $u_{t:\infty}=\textsf{MPC}_T(\theta_{t:\infty})$ is only a suboptimal control sequence. The notion of $\epsilon$-efficiency guarantees that the performance loss of the MPC from the globally optimal control strategy is bounded by $\epsilon$. 
In Section \ref{secperformance}, we will present an example of an LQ control problem in which $\epsilon$ can be obtained as a function of $T$.

We propose an online (multistage) mechanism $\mathcal{M}$ that can be used by the system operator to induce truthful reports by market participants at every time step. 
In this mechansim, the system operator introduces a real-time tax scheme (the amount of taxes charged to individual market participants are calculated according to some rule based on participants' reports), which creates an $N$-player multistage game. 
The real-time tax rule is carefully designed so that truth-telling by all participants corresponds to a subgame perfect equilibrium \cite{shoham2008multiagent} of the resulting multistage game. 

Formally, we consider $\mathcal{M}$ as a collection of ``submechanisms"  $\mathcal{M}(t,x_t)$ parameterized by time $t$ and the state $x_t$.
At every $(t,x_t)$, a submechanism $\mathcal{M}(t,x_t)$ accepts participants' reports $(\theta_t^1,\cdots,\theta_t^N)$, computes control input $u_t$, determines tax values charged to individual participants at time $t$, and sends the system to the next state $(t+1, x_{t+1})$. 
This way, a submechanism defines a subgame, and a collection of submechanisms defines a multistage game.

We assume that each player's cost is quasilinear, i.e., a summation of control-related cost (i.e., cost function $c_t^i$) and the amount of tax (denoted by $p_t^i$) charged by the system operator.
Under this assumption, submechanisms $\mathcal{M}(t,x_t)$ are fully specified once we fix how control inputs $u_t$ and tax values $p_t^i$ are determined based on the reported information.
Among many possible designs, we propose a particular design $\mathcal{M}_{\text{VCG-MPC}}$ specified by the following scheme.
\begin{enumerate}
\item A control action to be executed at the current time step is $u_t(\theta_t)=\textsf{OPENLOOP}_T(\theta_t)$.
\item Tax values to be charged to the $i$-th agent is $p_t^i(\theta_t)=\sum_{j\neq i} c_t^j(x_t^j, u_t^j(\theta_t))+K_t^i$, where $K_t^i$ is a quantity that does not depend on the history of the $i$-th agent's reports $(\theta_1^i,\cdots, \theta_t^i)$ calculated by a publicly know rule.
\end{enumerate}
Notice that this choice is motivated by the structure of the VCG mechanism (e.g., \cite{mas1995microeconomic}).

\subsection{Incentive compatibility}
Denote by $\pi_t^i(\theta_{t:\infty})\triangleq \sum_{k=t}^\infty p_k^i(\theta_k)$ the ``tax-to-go" function for the $i$-th agent at time $t$.
\begin{definition}
For a given time-state pair $(t,x_t)$, $\mathcal{M}(t,x_t)$ is said to be $\epsilon$-incentive compatible if for every $i$, $\theta_{t:\infty}\in\prod_{k=t}^\infty\prod_{i=1}^N \Theta_k^i$, and $\hat{\theta}_{t:\infty}^i \in \prod_{k=t}^\infty\Theta_k^i$, we have
\begin{align*}
&J_t^i(\textsf{MPC}_T(\theta_{t:\infty});\theta_{t:\infty}^i)+\pi_t^i(\theta_{t:\infty}) \\
&\leq J_t^i(\textsf{MPC}_T(\hat{\theta}_{t:\infty}^i,\theta_{t:\infty}^{-i});\theta_{t:\infty}^i)+\pi_t^i(\hat{\theta}_{t:\infty}^i, \theta_{t:\infty}^{-i})+\epsilon.
\end{align*}
A mechanism $\mathcal{M}$ is said to be $\epsilon$-incentive compatible if its submechanisms are all $\epsilon$-incentive compatible\footnote{Superscript ``$-i$" indicates the collection of agents excluding $i$. With an abuse of notation, we also write a vector $\theta$ as $(\theta^i, \theta^{-i})$.}.

\end{definition}

The $\epsilon$-incentive compatibility is a significant property of a mechanism guaranteeing that, at any time $t$ and any state $x_t$ of the system, no participant can find a false report sequence $\hat{\theta}_t^i, \hat{\theta}_{t+1}^i, \cdots$ for the future that reduce her net cost more than $\epsilon$ compared to the case in which she makes a truthful sequence of reports $\theta_t^i, \theta_{t+1}^i, \cdots$, and this holds true regardless of the other players' true cost functions and their reporting strategies. Hence, if $\epsilon$ can be made sufficiently small, no agent has a strict incentive to misreport her private parameters.

\begin{theorem}
\label{maintheo}
If $\textsf{MPC}_T(\cdot)$ is $\epsilon$-efficient, then for every time-state pair $(t,x_t)$, $\mathcal{M}_{\text{VCG-MPC}}(t,x_t)$ is $\epsilon$-incentive compatible. 
\end{theorem}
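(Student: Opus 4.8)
The plan is to run the classical VCG ``cost-alignment'' argument, recast in the cost-to-go/tax-to-go formulation, and then convert the resulting social-cost gap into the advertised $\epsilon$ by invoking $\epsilon$-efficiency. First I would fix a time-state pair $(t,x_t)$, an agent $i$, a true type profile $\theta_{t:\infty}$, and an arbitrary deviation $\hat{\theta}_{t:\infty}^i$. Writing $u_{t:\infty}=\textsf{MPC}_T(\theta_{t:\infty})$ and $\hat{u}_{t:\infty}=\textsf{MPC}_T(\hat{\theta}_{t:\infty}^i,\theta_{t:\infty}^{-i})$ for the control sequences induced by truthful reporting and by the deviation respectively, both emanate from the common current state $x_t$; let $x_{t:\infty}$ and $\hat{x}_{t:\infty}$ denote the induced trajectories.

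The first substantive step is to collapse the tax-to-go. Inserting the proposed rule $p_k^i(\theta_k)=\sum_{j\neq i}c_k^j(x_k^j,u_k^j)+K_k^i$ into $\pi_t^i(\theta_{t:\infty})=\sum_{k=t}^\infty p_k^i(\theta_k)$, and using that every $j\neq i$ reports truthfully (so the $c_k^j$ entering the tax are the agents' true costs, evaluated along the realized trajectory), gives $\pi_t^i(\theta_{t:\infty})=\sum_{j\neq i}J_t^j(u_{t:\infty}^j;\theta_{t:\infty}^j)+\bar{K}_t^i$, where $\bar{K}_t^i\triangleq\sum_{k=t}^\infty K_k^i$. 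Adding agent $i$'s own cost-to-go restores the missing $j=i$ summand, so that $i$'s truthful net cost-to-go equals the \emph{full social} cost plus a constant,
\[
J_t^i(u_{t:\infty}^i;\theta_{t:\infty}^i)+\pi_t^i(\theta_{t:\infty})=J_t(u_{t:\infty};\theta_{t:\infty})+\bar{K}_t^i .
\]
Repeating the identical computation under the deviation yields $J_t(\hat{u}_{t:\infty};\theta_{t:\infty})+\bar{K}_t^i$. The crucial design feature is that $K_k^i$ does not depend on the history of agent $i$'s reports, so the additive constant $\bar{K}_t^i$ is \emph{the same} in both expressions and cancels upon subtraction.

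The final step is the comparison. Subtracting the two net costs shows that the incentive-compatibility slack for agent $i$ is exactly the social-cost difference $J_t(u_{t:\infty};\theta_{t:\infty})-J_t(\hat{u}_{t:\infty};\theta_{t:\infty})$. Since $u_{t:\infty}=\textsf{MPC}_T(\theta_{t:\infty})$ is the MPC decision under the true profile, I would invoke $\epsilon$-efficiency at $(t,x_t)$ with the comparison sequence taken to be $\hat{u}_{t:\infty}$, bounding this difference by $\epsilon$. This is precisely the inequality defining $\epsilon$-incentive compatibility, and as $(t,x_t)$, $i$, $\theta_{t:\infty}$ and $\hat{\theta}_{t:\infty}^i$ were arbitrary, every submechanism $\mathcal{M}_{\text{VCG-MPC}}(t,x_t)$ is $\epsilon$-incentive compatible.

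I expect the one point requiring genuine care---rather than a deep obstacle---to be the legitimacy of instantiating $\epsilon$-efficiency at the deviation-induced control: one must verify that $\hat{u}_{t:\infty}$ is an admissible comparison, i.e. that it lies in $\prod_{k}\prod_i\mathcal{U}_{k,i}$, which holds because every $\textsf{OPENLOOP}_T$ output respects the control constraints. Two bookkeeping issues accompany this: keeping the ``evaluation type'' $\theta_{t:\infty}^i$ (with which $i$'s realized cost is measured) distinct from the ``reported type'' that actually drives the controls, and ensuring the infinite-horizon sums may be rearranged, which is guaranteed once the cost-to-go quantities are finite, e.g. under nonnegative stage costs.
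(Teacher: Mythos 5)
Your proposal is correct and is essentially the paper's own argument: both substitute the VCG-style tax rule so that agent $i$'s net cost-to-go becomes the full social cost $J_t(\cdot;\theta_{t:\infty})$ plus the report-independent constant $\sum_{k\geq t}K_k^i$, cancel that constant, and then invoke $\epsilon$-efficiency with the deviation-induced sequence $\textsf{MPC}_T(\hat{\theta}_{t:\infty}^i,\theta_{t:\infty}^{-i})$ as the comparison control. The only difference is presentational---you argue directly while the paper argues by contradiction---so no substantive gap exists.
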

\begin{proof}
Suppose there exists $t,i,\theta_{t:\infty}$ and $\hat{\theta}_{t:\infty}^i$ such that 
\begin{align*}
&J_t^i(\textsf{MPC}_T(\theta_{t:\infty});\theta_{t:\infty}^i)+\pi_t^i(\theta_{t:\infty}) \\
&> J_t^i(\textsf{MPC}_T(\hat{\theta}_{t:\infty}^i,\theta_{t:\infty}^{-i});\theta_{t:\infty}^i)+\pi_t^i(\hat{\theta}_{t:\infty}^i, \theta_{t:\infty}^{-i})+\epsilon.
\end{align*}
Substituting (\ref{defji}) and the expression of $p_k^i(\theta_k)$ in 2),
\begin{align*}
&\sum_{k=t}^\infty c_k^i(x_k^i,u_k^i)+\sum_{k=t}^\infty\sum_{j\neq i}c_k^j(x_k^j,u_k^j) +\sum_{k=t}^\infty K_k^i\\
&> \sum_{k=t}^\infty c_k^i(\hat{x}_k^i,\hat{u}_k^i)+\sum_{k=t}^\infty\sum_{j\neq i}c_k^j(\hat{x}_k^j,\hat{u}_k^j) +\sum_{k=t}^\infty K_k^i+\epsilon
\end{align*}
where $u_{t:\infty}=\textsf{MPC}_T(\theta_{t:\infty})$, $\hat{u}_{t:\infty}=\textsf{MPC}_T(\hat{\theta}_{t:\infty}^i,\theta_{t:\infty}^{-i})$, and $x_{t:\infty}$ and $\hat{x}_{t:\infty}$ are trajectories driven by $u_{t:\infty}$  and $\hat{u}_{t:\infty}$ respectively, starting from $x_t=\hat{x}_t$.
Using (\ref{defj}), this can be rearranged as
\begin{align*}
&J_t(\textsf{MPC}_T(\theta_{t:\infty});\theta_{t:\infty})+\sum_{k=t}^\infty K_k^i \\
&>J_t(\textsf{MPC}_T(\hat{\theta}_{t:\infty}^i,\theta_{t:\infty}^{-i});\theta_{t:\infty})+\sum_{k=t}^\infty K_k^i+\epsilon.
\end{align*}
Since $\sum_{k=t}^\infty K_{k,i}$ does not depend on $\theta_i$,
\[
J_t(\textsf{MPC}_T(\theta_{t:\infty});\theta_{t:\infty})>J_t(\textsf{MPC}_T(\hat{\theta}_{t:\infty}^i,\theta_{t:\infty}^{-i});\theta_{t:\infty})+\epsilon.
\]
However, this contradicts to the $\epsilon$-efficiency of $\textsf{MPC}_T(\cdot)$.
\end{proof}

\subsection{Choice of free parameters}
Proposed mechanism $\mathcal{M}_{\text{VCG-MPC}}$ has a large degree of freedom in the choice of function $K_t^i$. In the standard framework of Groves mechanisms, this degree of freedom is used to achieve other desirable properties, such as \emph{budget balance}  and \emph{individual rationality} \cite[Ch.23]{mas1995microeconomic}.
In the context of online mechanisms, \cite{bergemann2010dynamic} proposes to choose $K_t^i$ as
\begin{equation}
\label{eqvcgk}
K_t^i=-\sum_{j\neq i} \text{``$j$'s cost at $t$ when $i$ is absent"}.
\end{equation}
In the problem formulated in \cite{bergemann2010dynamic}, the tax calculated by $p_t^i(\theta_t)=\sum_{j\neq i} c_t^j(x_t^j, u_t^j(\theta_t))+K_t^i$ with (\ref{eqvcgk}) matches the \emph{flow marginal contribution} (marginal contribution at specific time instances) of agent $i$ to the rest of the society, and the resulting online mechanism possesses some desirable properties.
%In what follows, we consider how to evaluate (\ref{eqvcgk}). 

Unfortunately, due to several important differences between our problem setting and that of \cite{bergemann2010dynamic}\footnote{In our problem formulation, participant $i$'s strategy at time $t$ affects the future state of other participants $x^j_k$, $k> t, j\neq i$. Such a dynamic coupling does not appear in the model of \cite{bergemann2010dynamic}.}, evaluating (\ref{eqvcgk}) in our scenario is a much more complicated task. 
To evaluate the marginal contribution by the $i$-th agent, we need to consider a situation in which the $i$-th agent is absent from the market.
One possible approach is to force the $i$-th control input to be zero over the entire time horizon. 
In this case, the society's cost with the $i$-th agent being absent is captured by the following optimal control problem:
\begin{subequations}
\label{eqmpcmarginal}
\begin{align}
 \min\;\; & \sum_{t=1}^\infty \sum_{j\neq i} c_k^j(x_k^j, u_k^j) \\
\text{s.t. }\;\;&  x_{t+1}=f_t(x_t, u_t) \label{eqmpcmarginal1}\\
& x_t^j \in\mathcal{X}_t^j,   u_t^j \in\mathcal{U}_t^j \\
& \forall t\in\{1, 2, \cdots \}, \forall j \in\{1,\cdots, N\} \\
& u_t^i =0 \;\; \forall t \in \{1,2,\cdots \}.
\end{align}
\end{subequations}
Denote by an optimal solution to (\ref{eqmpcmarginal}) by $x[-i]$, $u[-i]$.
Then it is reasonable to evaluate (\ref{eqvcgk}) by
\begin{equation}
\label{eqvcgk1}
K_t^i=-\sum_{j\neq i} c_t^j (x[-i]_t^j, u[-i]_t^j).
\end{equation}
Notice that $K_t^i$ defined this way does not depend on the $i$-th agent's reporting strategy, since solving (\ref{eqmpcmarginal}) does not require the knowledge of $\theta_1^i, \theta_2^i, \cdots$. Thus the resulting mechanism attains $\epsilon$-incentive compatibility, as per Theorem \ref{maintheo}.
However, whether the choice (\ref{eqvcgk1}) has an advantage (in terms of, e.g., individual rationality or budget balance) in the considered LFC problem is currently unknown.

\section{A Special Case}
\label{secperformance}
In this section, we consider a special case in which plant (\ref{eqplant}) is linear time-invariant (LTI), the cost functions are quadratic, and there are no state and control constraints.
The purpose of this section is to show a concrete example in which MPC is $\epsilon$-efficient and hence the resulting mechanism is $\epsilon$-incentive compatible.
%Notice that MPC may not be needed for a linear-quadratic optimal control problem since an optimal controller can be found explicitly. 
%However, this is one of rare examples in which the $\epsilon$-efficiency of the MPC can be directly verified using the technique in \cite{nevistic1997finite}.
Using techniques in \cite{nevistic1997finite}, we also analyze how the horizon length $T$ affects the $\epsilon$-incentive compatibility.
Such an explicit analysis may not be possible for more practical MPCs, but simple observations in this section provide valuable intuition for more complex cases.  

Consider an infinite horizon optimal control problem
\[
\min_{u_{0:\infty}} \; \sum_{t=0}^\infty  \left(x_t^\top Q_t x_t+u_t^\top R_t u_t\right)
\]
with $Q_t=\text{diag}(Q_t^1, \cdots, Q_t^N)$, $R_t=\text{diag}(R_t^1, \cdots, R_t^N)$, subject to a linear plant equation $x_{t+1}=Ax_t+Bu_t$ with some given initial state $x_0$. Assume $(A,B)$ is a stabilizable pair.
Denote the cost-to-go function by
\begin{equation}
\label{eqoptcost}
J_t(x_t)=\sum_{k=t}^\infty \left(x_k^\top Q_k x_k+u_k^\top R_k u_k\right).
\end{equation}
Suppose that $\theta_t^i=(Q_t^i, R_t^i)$ are private matrices and need to be reported to the system operator. 
However, we assume that it is \emph{a priori} known that $Q_t$ and $R_t$ satisfy
\begin{subequations}
\label{eqQRbound}
\begin{align}
&0\prec \underbar{Q} \preceq Q_t \preceq \overline{Q} \\
&0\prec \underbar{R} \preceq R_t \preceq \overline{R}
\end{align}
\end{subequations}
and are slowly time-varying in that
\begin{subequations}
\label{eqQRchange}
\begin{align}
&(1-\delta) Q_t \preceq Q_{t+1} \preceq (1+\delta) Q_t \\
&(1-\delta) R_t \preceq R_{t+1} \preceq (1+\delta) R_t 
\end{align}
\end{subequations}
with some small constant $\delta>0$.
We require that the reported sequence of matrices also satisfy (\ref{eqQRbound}) and (\ref{eqQRchange}).

Consider an MPC in which the social planner solves an open loop optimal control problem
\begin{equation}
\label{eqLQMPClti}
J_{t,T}(x_t;\theta_t)\triangleq \sum_{k=t}^{t+T-1} \left(x_k^\top Q_t x_k+u_k^\top R_t u_k\right).
\end{equation}
In (\ref{eqLQMPClti}), notice the weight matrices $Q_t$ and $R_t$ reported at time step $t$ are used over the entire horizon.
If $Q_t$ and $R_t$ vary sufficiently slowly, this is a reasonable MPC algorithm.
This MPC policy can be written as
\begin{align}
&\hat{u}_{t,T}(x_t)\triangleq \label{eqLQRHC}\\
& \argmin_{u_t} \left[x_t^\top Q_tx_t+u_t^\top R_t u_t + J_{t+1,T-1}(Ax_t+Bu_t;\theta_t)\right] \nonumber 
\end{align}
The cost-to-go incurred by the policy (\ref{eqLQRHC}) is denoted by
\begin{equation}
\label{eqMPCcost}
\hat{J}_{t,T}(x_t)=\sum_{k=t}^\infty x_k^\top Q_k x_k+\hat{u}_{k,T}^\top(x_k)R_k\hat{u}_{k,T}(x_k).
\end{equation}

\ifdefined\SHORTVERSION
Based on \cite{nevistic1997finite}, we can establish
\begin{equation}
\label{eqMPCexampleepsilon}
J_t(x_t)\leq \hat{J}_{t,T}(x_t)\leq (1+\epsilon_T) J_t(x_t)
\end{equation}
for every $(t,x_t)$, with some explicit expression of a constant $\epsilon_T$. See \cite{tanaka2016extended} for the details.
\fi
\ifdefined\LONGVERSION
Based on \cite{nevistic1997finite}, in Appendix we establish
\begin{equation}
\label{eqMPCexampleepsilon}
J_t(x_t)\leq \hat{J}_{t,T}(x_t)\leq (1+\epsilon_T) J_t(x_t)
\end{equation}
for every $(t,x_t)$, with some explicit expression of a constant $\epsilon_T$.
\fi
This inequality guarantees the $\epsilon$-efficiency of the considered MPC algorithm, and the $\epsilon$-incentive compatibility of the corresponding online mechanism $\mathcal{M}_{\text{VCG-MPC}}$.

Since $\epsilon_T$ tends to be small for large $T$, one can conclude that it is advantageous to use longer planning horizons to mitigate strategic misreporting. However, since MPC with long planning horizon is computational expensive, there is a trade-off between computational cost and incentive compatibility.

\section{Conclusion}
\label{sec:conclusions}
In this paper, motivated by  load frequency control in a power grid, we formulate the problem of designing online market mechanisms to incentivize strategic selfish entities that wish to provide frequency control services to the grid, to report their private information truthfully to the system operator. Using this private information, the system operator can use model predictive control to calculate control inputs that are socially optimal. The main challenge arises from the fact that every participant's strategy at any time affects the future state of other participants. Our solution is a VCG-like online mechanism that implements MPC in a way that guarantees that no strategic participant can gain by more than a specified bound by misreporting. 

Future work includes consideration of budget balance constraints in the formulation. It will also be of interest to include in the same framework the design of incentives for the participants to implement the control actions as well.

%%%%%%%%%%%%%%%%%%%%%%%%%%%%%%%%%%%%%%%%%%%%%%%%%%%%%%%%%%%%%%%%%%%%%%%%%%%%%%%%
\ifdefined\LONGVERSION
\section*{Appendix: Proof of~(\ref{eqMPCexampleepsilon})}
We start with the following technical lemma.
\begin{lemma}
There exists a sequence $\{\alpha_T\}$ such that $\alpha_T>1$, $\lim_{T\rightarrow \infty} \alpha_T=1$, and
\begin{equation*}
\alpha_{T+1} J_{t,T}(x_t;\theta_t) \geq J_{t,T+1}(x_t;\theta_t) \;\; \forall t \;\forall x_t.
\end{equation*}
\end{lemma}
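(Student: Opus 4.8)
The plan is to reduce the claim to a uniform comparison of finite-horizon Riccati iterates. Because the weights $\theta_t=(Q_t,R_t)$ reported at time $t$ are held fixed over the whole horizon in (\ref{eqLQMPClti}), the quantity $J_{t,T}(x_t;\theta_t)$ is simply the optimal value of a time-invariant $T$-stage LQR problem with data $(A,B,Q_t,R_t)$ and zero terminal cost, so $J_{t,T}(x_t;\theta_t)=x_t^\top P_T(\theta_t)x_t$, where $P_0=0$ and $P_{k+1}=Q+A^\top P_kA-A^\top P_kB(R+B^\top P_kB)^{-1}B^\top P_kA$ for $\theta=(Q,R)$. Since the inequality must hold for every $x_t$, it is equivalent to the matrix inequality $P_{T+1}(\theta)\preceq\alpha_{T+1}P_T(\theta)$, and since it must hold for every $t$ it must hold for every admissible $\theta$ in the compact set $\Theta$ cut out by (\ref{eqQRbound}). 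First I would record the classical Riccati facts for each fixed $\theta$: starting from $P_0=0$ the iterates increase monotonically, $0=P_0\preceq P_1\preceq\cdots$, and (using that $(A,B)$ is stabilizable and that $Q\succeq\underline{Q}\succ0$ makes $(A,Q^{1/2})$ detectable) converge to the stabilizing solution $P_\infty(\theta)$ of the Riccati equation; moreover $P_T(\theta)\succeq P_1(\theta)=Q\succeq\underline{Q}$ for $T\geq1$, which keeps $P_T$ bounded away from $0$. (The case $T=0$ is excluded since $P_0=0$; the bound is claimed for $T\geq1$.)

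Next I would convert the additive convergence $P_T\to P_\infty$ into the desired multiplicative bound. Define $e_T(\theta):=\lambda_{\max}\!\big(P_\infty(\theta)-P_T(\theta)\big)\geq0$. Since $P_{T+1}(\theta)\preceq P_\infty(\theta)$ by monotonicity, and since $I\preceq\lambda_{\min}(\underline{Q})^{-1}\underline{Q}\preceq\lambda_{\min}(\underline{Q})^{-1}P_T(\theta)$ for $T\geq1$, I can write
\[
P_{T+1}(\theta)\preceq P_\infty(\theta)=P_T(\theta)+\big(P_\infty(\theta)-P_T(\theta)\big)\preceq P_T(\theta)+e_T(\theta)I\preceq\Big(1+\tfrac{e_T(\theta)}{\lambda_{\min}(\underline{Q})}\Big)P_T(\theta).
\]
Thus any $\alpha_{T+1}\geq 1+e_T(\theta)/\lambda_{\min}(\underline{Q})$ that is valid for all $\theta\in\Theta$ proves the lemma, provided it tends to $1$.

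The crux is therefore to bound $\sup_{\theta\in\Theta}e_T(\theta)$ and to show it vanishes as $T\to\infty$; this is exactly where the ``for all $t$'' quantifier bites, because for a single $\theta$ the convergence $e_T(\theta)\to0$ is classical but only pointwise. I would obtain the uniform statement from Dini's theorem: each $e_T$ is continuous on $\Theta$ (the finitely many Riccati steps are continuous in $(Q,R)$ because $R+B^\top P_kB\succeq\underline{R}\succ0$ stays invertible, and the stabilizing solution $P_\infty(Q,R)$ depends continuously on $(Q,R)$ throughout the stabilizable/detectable set $\Theta$); the family is monotone in $T$, since $0\preceq P_\infty-P_{T+1}\preceq P_\infty-P_T$ gives $e_{T+1}(\theta)\leq e_T(\theta)$; and $e_T\to0$ pointwise. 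A decreasing sequence of continuous functions converging pointwise to the continuous limit $0$ on the compact set $\Theta$ converges uniformly, so $\bar e_T:=\sup_{\theta\in\Theta}e_T(\theta)\to0$. Setting $\alpha_{T+1}:=1+\max\{\bar e_T/\lambda_{\min}(\underline{Q}),\,2^{-T}\}$ then gives $\alpha_{T+1}>1$, $\alpha_{T+1}\to1$, and $P_{T+1}(\theta)\preceq\alpha_{T+1}P_T(\theta)$ for all $\theta\in\Theta$, which is the claim.

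The main obstacle is precisely this uniformity over the admissible weights rather than the single-parameter convergence; beyond Dini, the compactness of $\Theta$ in (\ref{eqQRbound}) is essential, as is the continuity (hence boundedness) of $P_\infty(\cdot)$, which in turn rests on stabilizability of $(A,B)$ together with $Q\succ0$. Note that only the \emph{boundedness} assumption (\ref{eqQRbound}) is needed here; the slow-variation hypothesis (\ref{eqQRchange}) plays no role in this lemma. An alternative that avoids Dini and yields an explicit rate is to invoke the uniform geometric convergence of the Riccati recursion from \cite{nevistic1997finite}: one shows $\|P_\infty(\theta)-P_T(\theta)\|\leq C\rho^{\,T}$ with $C>0$ and $\rho\in(0,1)$ taken as the worst case over the compact set $\Theta$, and then reads off $\alpha_{T+1}=1+C\rho^{\,T}/\lambda_{\min}(\underline{Q})$. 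Either route relies on the same two ingredients: the lower bound $P_T\succeq\underline{Q}$ to pass from an additive to a multiplicative estimate, and uniform (not merely pointwise) decay of $P_\infty-P_T$ over the admissible parameter set.
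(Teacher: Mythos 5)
Your proposal is correct and follows essentially the same route as the paper: both express $J_{t,T}(x_t;\theta_t)=x_t^\top P_T x_t$ via the Riccati recursion with initial condition $P_0=0$ and then appeal to convergence of that recursion. The paper's proof stops at ``due to the convergence of the Riccati recursion, the claim clearly holds,'' whereas you additionally supply the two details that this sentence glosses over --- the lower bound $P_T \succeq \underline{Q}$ needed to turn additive convergence of $P_T$ into the multiplicative bound $P_{T+1}\preceq \alpha_{T+1}P_T$, and the Dini/compactness argument making $\alpha_T$ uniform over all admissible $\theta_t$ (the ``for all $t$'' quantifier) --- so your write-up is a strictly more complete version of the same argument.
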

\begin{proof}
Notice that $J_{t,T}(x_t;\theta_t)=x_t^\top P_T x_t$ and $J_{t,T+1}(x_t;\theta_t)=x_t^\top P_{T+1} x_t$ where $P_T$ and $P_{T+1}$ are obtained by a Riccati recursion
\[
P_{k+1}=A^\top P_k A-A^\top P_k B (B^\top P_k B+R_t)^{-1}B^\top P_k A+Q_t
\]
with the initial condition $P_0=0$. Due to the convergence of the Riccati recursion, the claim clearly holds.
\end{proof}

Next, introduce a constant $0<\rho_T<1$ as the largest number that satisfies
\[
x_t^\top Q_t x_t \geq \rho_T J_{t,T}(x_t;\theta_t) \;\; \forall x_t \; \forall t.
\]
To compute $\rho_T$ explicitly, consider another Riccati recursion 
\[
\bar{P}_{k+1}=A^\top \bar{P}_k A-A^\top \bar{P}_k B (B^\top \bar{P}_k B+R_t)^{-1}B^\top \bar{P}_k A+Q_t
\]
with the initial condition $\bar{P}_0=0$. Since $R_t\leq \bar{R}$ and $Q_t\leq \bar{Q}$, due to the monotonicity of Riccati recursions, we have $\bar{P}_T \succeq P_T$. Set $\rho_T\triangleq \max\{\rho: \rho \bar{P}_T \preceq \underbar{Q}\}$. Then
\begin{align*}
x_t^\top Q_t x_t &\geq x_t^\top \underbar{Q} x_t \geq \rho_T x_t^\top \bar{P}_T x_t \\
&\geq \rho_T x_t^\top P_T x_t =\rho_T J_{t,T}(x_t;\theta_t).
\end{align*}

%\begin{assumption}
%\label{asmp2}
%\begin{itemize}
%\item[1)] There exists a sequence $\{\alpha_T\}$ such that $\alpha_T>1$, $\lim_{T\rightarrow \infty} \alpha_T=1$, and
%\[
%\alpha_{T+1} J_{t,T}(x) \geq J_{t,T+1}(x) \;\; \forall t \;\forall x.
%\]
%\item[2)] There exists $\delta>0$ such that $Q(t)\succeq \delta I \;\; \forall t$
%\item[3)] There exists a sequence $\{\rho_T\}$ such that
%\[
%x^\top Q(t) x \geq \rho_T J_{t,T}(x) \;\; \forall x \; \forall t.
%\]
%\end{itemize}
%\end{assumption}
%(We need to discuss here when these assumptions are satisfied.)

\begin{lemma}
Let $x_{0:\infty}$ and $u_{0:\infty}$ be the state and control trajectories resulting from the receding horizon control policy defined by (\ref{eqLQRHC}). 
\begin{itemize}
\item[(a)] $J_{t+1,T}(x_{t+1};\theta_{t+1}) \leq \gamma_T J_{t,T}(x_t;\theta_t)$ holds for every $t=0,1,\cdots$, where $\gamma_T\triangleq \frac{(1-\rho_T)\alpha_T}{(1-\delta)}$. 
\item[(b)] If $\gamma_T<1$, then the receding horizon control policy (\ref{eqLQRHC}) is stabilizing.
\item[(c)] If $\gamma_T<1$, then for every $t$, we have
\[
J_t(x_t)\!\leq\! \hat{J}_{t,T}(x_t)\!\leq\! \tfrac{\rho_T}{1-\gamma_T} J_{t,T}(x_t;\theta_t) \!\leq\! \tfrac{\rho_T(1-\delta)^{1-T}}{1-\gamma_T} J_t(x_t).
\]
\end{itemize}
\end{lemma}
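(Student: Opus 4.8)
The plan is to treat part (a) as the engine: it produces a one-step contraction of the receding-horizon value function along the closed loop, from which (b) and the middle inequality of (c) follow. Throughout, write $V_t \triangleq J_{t,T}(x_t;\theta_t)$ for the $T$-horizon value evaluated along the closed-loop trajectory generated by (\ref{eqLQRHC}), and let $\ell_t \triangleq x_t^\top Q_t x_t+\hat u_{t,T}(x_t)^\top R_t \hat u_{t,T}(x_t)$ be the actual per-stage cost, so that $\hat J_{t,T}(x_t)=\sum_{k=t}^\infty \ell_k$ by (\ref{eqMPCcost}).

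For (a) I would chain four facts. First, Bellman's principle for the $T$-horizon problem with the frozen weights $\theta_t$ gives $V_t=\ell_t+J_{t,T-1}(x_{t+1};\theta_t)$, where $x_{t+1}=Ax_t+B\hat u_{t,T}(x_t)$. Second, since $\ell_t\ge x_t^\top Q_t x_t\ge \rho_T V_t$ by the definition of $\rho_T$, the tail satisfies $J_{t,T-1}(x_{t+1};\theta_t)=V_t-\ell_t\le(1-\rho_T)V_t$. Third, the first technical lemma (applied with horizon $T-1$ at the point $x_{t+1}$) lifts this to $J_{t,T}(x_{t+1};\theta_t)\le \alpha_T J_{t,T-1}(x_{t+1};\theta_t)\le\alpha_T(1-\rho_T)V_t$. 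Fourth, the slow-variation bound (\ref{eqQRchange}) gives $Q_{t+1}\preceq(1+\delta)Q_t\preceq\frac{1}{1-\delta}Q_t$ and likewise for $R$, so comparing the two cost functionals term by term and minimizing over controls yields $J_{t+1,T}(x_{t+1};\theta_{t+1})\le\frac{1}{1-\delta}J_{t,T}(x_{t+1};\theta_t)$. Composing the third and fourth steps produces $V_{t+1}\le\frac{\alpha_T(1-\rho_T)}{1-\delta}V_t=\gamma_T V_t$, which is (a). Part (b) is then immediate: iterating gives $V_t\le\gamma_T^{\,t}V_0\to0$ when $\gamma_T<1$, and since $V_t\ge x_t^\top Q_t x_t\ge\lambda_{\min}(\underline{Q})\|x_t\|^2$ with $\underline{Q}\succ0$ by (\ref{eqQRbound}), we conclude $x_t\to0$.

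For (c), the outer two inequalities are the easy ones. The inequality $J_t(x_t)\le\hat J_{t,T}(x_t)$ holds because $\hat J_{t,T}$ is the cost of the (stabilizing, by (b)) closed-loop policy while $J_t$ is the infinite-horizon optimum. The rightmost inequality reduces to $J_{t,T}(x_t;\theta_t)\le(1-\delta)^{1-T}J_t(x_t)$, which I would obtain by iterating (\ref{eqQRchange}) to get $Q_t\preceq(1-\delta)^{-(k-t)}Q_k\preceq(1-\delta)^{1-T}Q_k$ for every $k\in\{t,\dots,t+T-1\}$ (and similarly for $R$), evaluating the frozen-weight functional (\ref{eqLQMPClti}) on the infinite-horizon optimal control truncated to the horizon, and bounding the truncated sum by the full infinite-horizon optimal cost $J_t(x_t)$. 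The middle inequality $\hat J_{t,T}(x_t)\le\frac{\rho_T}{1-\gamma_T}J_{t,T}(x_t;\theta_t)$ is the crux and uses a relaxed value-decrease (telescoping) argument: writing $W\triangleq J_{t,T-1}(x_{t+1};\theta_t)$, the identity and bounds from part (a) give $V_t-V_{t+1}\ge\ell_t-\big(\frac{\alpha_T}{1-\delta}-1\big)W$, and bounding $W\le(1-\rho_T)V_t\le\frac{1-\rho_T}{\rho_T}\ell_t$ collapses the coefficient exactly to $V_t-V_{t+1}\ge\frac{1-\gamma_T}{\rho_T}\ell_t$. Summing this telescoping inequality from $k=t$ to $\infty$ and using $V_k\to0$ yields $\hat J_{t,T}(x_t)=\sum_{k\ge t}\ell_k\le\frac{\rho_T}{1-\gamma_T}V_t$.

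The main obstacle I anticipate is precisely this middle inequality: one must recognize that the useful decrease of the value function is not the naive $\ell_t\le V_t$ (which would only give the weaker constant $\frac{1}{1-\gamma_T}$), but the sharper $\frac{1-\gamma_T}{\rho_T}\ell_t\le V_t-V_{t+1}$, and that the algebraic identity $1-\big(\frac{\alpha_T}{1-\delta}-1\big)\frac{1-\rho_T}{\rho_T}=\frac{1-\gamma_T}{\rho_T}$ is exactly what makes the stated constant $\frac{\rho_T}{1-\gamma_T}$ appear. Keeping careful track of which weight matrices ($\theta_t$ versus $\theta_{t+1}$, frozen over the horizon versus genuinely time-varying) enter each value function, and applying the slow-variation bound in the correct direction at each step, is where the bookkeeping is delicate.
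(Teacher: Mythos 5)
Your proof is correct and follows essentially the same route as the paper's: part (a) is built from the same four ingredients (Bellman's principle with frozen weights, the definition of $\rho_T$, the $\alpha_T$ lemma, and the slow-variation bound (\ref{eqQRchange})), part (b) is the identical Lyapunov-decay argument, and part (c) uses the same trivial first inequality, the same truncation-plus-weight-comparison for the last inequality, and arrives at the identical constant $\tfrac{\rho_T}{1-\gamma_T}$ in the middle. The only differences are bookkeeping: in (a) you apply the $\alpha_T$ lemma before switching weights from $\theta_t$ to $\theta_{t+1}$ (the paper does the reverse), and in (c) you fold everything into the per-step inequality $\ell_k \le \tfrac{\rho_T}{1-\gamma_T}\left(V_k - V_{k+1}\right)$ and telescope, whereas the paper telescopes $\ell_k \le \left(V_k - V_{k+1}\right) + \tfrac{\alpha_T+\delta-1}{\alpha_T}V_{k+1}$ and then sums the geometric series $\sum_{l\ge 1}\gamma_T^l$ supplied by (a) --- algebraically equivalent computations yielding the same bound.
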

\begin{proof}
(a). For every $t=0,1,\cdots$, we have
\begin{align*}
J_{t,T}(x_t;\theta_t)&=x_t^\top Q_tx_t+u_t^\top R_tu_t+J_{t+1,T-1}(x_{t+1};\theta_t) \\
&\geq \rho_T J_{t,T}(x_t;\theta_t)+J_{t+1,T-1}(x_{t+1};\theta_t) \\
&\geq \rho_T J_{t,T}(x_t;\theta_t)+(1-\delta)J_{t+1,T-1}(x_{t+1};\theta_{t+1}) \\
&\geq \rho_T J_{t,T}(x_t;\theta_t)+\tfrac{1-\delta}{\alpha_T}J_{t+1,T}(x_{t+1};\theta_{t+1}).
\end{align*}
Rearranging, we have $
\gamma_T J_{t,T}(x_t;\theta_t) \geq J_{t+1,T}(x_{t+1};\theta_{t+1})$.

(b). If $\gamma_T<1$, this implies that 
\[
\lim_{t\rightarrow \infty} J_{t,T}(x_t;\theta_t) \leq \lim_{t\rightarrow \infty} (\gamma_T)^t J_{0,T}(x_0;\theta_0)=0.
\]
Since $J_{t,T}(x_t;\theta_t) \geq x^\top \underbar{Q}x$ and $\underbar{Q}\succ 0$,
this proves  $\lim_{t\rightarrow \infty} x_t=0$.

(c). The first inequality is trivial, since (\ref{eqoptcost}) is the optimal cost-to-go, while (\ref{eqMPCcost}) is the cost-to-go attained by a suboptimal control polity (\ref{eqLQRHC}).
To see the second inequality, 
note that
\begin{align*}
&x_t^\top Q_tx_t+\hat{u}_{t,T}^\top(x_t)R_t\hat{u}_{t,T}(x_t)\\
&=J_{t,T}(x_t;\theta_t)-J_{t+1,T-1}(x_{t+1};\theta_t) \\
&\leq J_{t,T}(x_t;\theta_t)-(1-\delta)J_{t+1,T-1}(x_{t+1};\theta_{t+1}) \\
&\leq J_{t,T}(x_t;\theta_t)-\tfrac{1-\delta}{\alpha_T}J_{t+1,T}(x_{t+1};\theta_{t+1}) \\
&= J_{t,T}(x_t;\theta_t)-J_{t+1,T}(x_{t+1};\theta_{t+1}) \\
&\hspace{15ex}+\tfrac{\alpha_T+\delta-1}{\alpha_T}J_{t+1,T}(x_{t+1};\theta_{t+1}) 
\end{align*}
Similarly,
\begin{align*}
&x_{t+1}^\top Q_{t+1}x_{t+1}+\hat{u}_{t+1,T}^\top(x_{t+1}) R_{t+1}\hat{u}_{t+1,T}(x_{t+1}) \\
&\leq J_{t+1,T}(x_{t+1};\theta_{t+1})-J_{t+2,T}(x_{t+2};\theta_{t+2}) \\
&\hspace{15ex}+\tfrac{\alpha_T+\delta-1}{\alpha_T}J_{t+2,T}(x_{t+2};\theta_{t+2}).
\end{align*}
Thus,
\begin{align*}
\hat{J}_{t,T}&(x_t)=\sum_{k=t}^\infty x_k^\top Q_k x_k+\hat{u}_{k,T}^\top(x_k)R_k\hat{u}_{k,T}(x_k)  \\
& \leq J_{t,T}(x_t;\theta_t)+\left(\tfrac{\alpha_T+\delta-1}{\alpha_T}\right)\sum_{k=t}^\infty J_{k+1,T}(x_{k+1};\theta_{k+1})\\
& \leq J_{t,T}(x_t;\theta_t)+\left(\tfrac{\alpha_T+\delta-1}{\alpha_T}\right)\left(\sum_{l=1}^\infty \gamma_T^l\right)  J_{t,T}(x_t;\theta_t) \\
& \leq \left(1+\tfrac{\alpha_T+\delta-1}{\alpha_T}\tfrac{\gamma_T}{1-\gamma_T}\right)  J_{t,T}(x_t;\theta_t) \\
& =\tfrac{\rho_T}{1-\gamma_T}  J_{t,T}(x_t;\theta_t).
\end{align*}
To see the last inequality, note that
\begin{align*}
J_t(x_t) &\geq \inf_{u_{t:t+T-1}}\sum_{k=t}^{t+T-1}\left( x_k^\top Q_k x_k + u_k^\top R_k u_k \right) \\
&\geq  (1-\delta)^{T-1}\inf_{u_{t:t+T-1}}\sum_{k=t}^{t+T-1}\left( x_k^\top Q_t x_k + u_k^\top R_t u_k \right) \\
&=(1-\delta)^{T-1} J_{t,T}(x_t;\theta_t).
\end{align*}
The second inequality follows from the fact that
\[ (1-\delta)^{T-1}Q_t\preceq Q_k, \;\; (1-\delta)^{T-1}R_t\preceq R_k\]
for every $k=t,\cdots, t+T-1$. This is a consequence of the rate-of-change constraints (\ref{eqQRchange}).
\end{proof}

Finally, (\ref{eqMPCexampleepsilon}) is obtained by choosing $1+\epsilon_T=\tfrac{\rho_T(1-\delta)^{1-T}}{1-\gamma_T}$.

\fi

%%%%%%%%%%%%%%%%%%%%%%%%%%%%%%%%%%%%%%%%%%%%%%%%%%%%%%%%%%%%%%%%%%%%%%%%%%%%%%%%

\bibliographystyle{IEEEtran}
\bibliography{Refs3}

\end{document}